\theoremstyle{thmstyleone}
\newtheorem{theorem}{Theorem}
\theoremstyle{thmstyletwo}
\theoremstyle{thmstylethree}
\newtheorem{lemma}{Lemma}
\newtheorem{assumption}{Assumption}
\newtheorem{definition}{Definition}
\begin{document}
% \begin{frontmatter}

\title[Chance constrained nonlinear fractional programming with random benchmark]{Chance constrained nonlinear fractional programming with random benchmark}

\author[1]{\fnm{Tian} \sur{Xia}}\email{xt990221@stu.xjtu.edu.cn}

\author*[1]{\fnm{Jia} \sur{Liu}}\email{jialiu@xjtu.edu.cn}

\affil[1]{\orgdiv{School of Mathematics and Statistics}, \orgname{Xi'an Jiaotong University}, \orgaddress{ \city{Xi'an}, \postcode{710049}, \country{P. R. China}}}

% \affil*[2]{\orgdiv{Department}, \orgname{Organization}, \orgaddress{\street{Street}, \city{City}, \postcode{10587}, \state{State}, \country{Country}}}

% \author[inst1]{Tian Xia}
%  \author[inst1]{Jia Liu\fnref{label2}}

%  \fntext[label2]{ Corresponding author. Email addresses: 
%  xt990221@stu.xjtu.edu.cn (T. Xia), 
%  jialiu@xjtu.edu.cn (J. Liu)
% % zchen@mail.xjtu.edu.cn}% (Z. Chen) 
% }
% %\date{December 2022}
% \affiliation[inst1]{addressline=School of Mathematics and Statistics, Xi'an Jiaotong University, 710049, Xi'an, P. R. China}

%\title{Chance constrained nonlinear fractional programming with random benchmark}
\abstract{
This paper studies the chance constrained fractional programming with a random benchmark. We assume that the random variables on the numerator follow the Gaussian distribution, and the random variables on the denominator and the benchmark follow a joint discrete distribution. Under some mild assumptions, we derive a convex reformulation of chance constrained fractional programming. For practical use, we apply piecewise linear and tangent approximations to the quantile function. We conduct numerical experiments on a main economic application problem.
}

\keywords{
Fractional programming, Chance constraints, Random benchmark, Piecewise linear approximation
}

\maketitle

%\end{frontmatter}

\section{Introduction}
Fractional programming is an important branch of mathematical programming that deals with the optimization of a function defined as the ratio of two functions, i.e., $\frac{f(x)}{g(x)}$, which was initially elaborated by Kantorovich in 1960 \cite{kantorovich1960mathematical}.
Now it has gained significant attention due to its success applications in problems such as information theory \cite{shen2018fractional}, numerical analysis \cite{crouzeix1991algorithms}, management science \cite{zhu2014planning} and finance \cite{goedhart1995financial}. For example, there are many ratios between physical and economical functions, like cost/profit, cost/volume or cost/time.

Among the research of fractional programming, nonlinear fractional programming is a broad subset that deals with the optimization of a function expressed as a ratio of two nonlinear functions \cite{dinkelbach1967nonlinear}. Since widely applicable nature of nonlinearity in fractional functions, it has been applied in many important fields, including financial portfolio optimization \cite{bradley1974fractional}, resource allocation \cite{yang2016energy}, transportation and logistics \cite{schaible2003fractional} and production planning \cite{chu2013integration}. When we face some randomness in a fractional function constraint, we often expect that the constraint can be satisfied in a high probability in many real life problems. Therefore we consider using the chance constraint to guarantee this point, which is especially useful in cases where the violation of
the random constraint might bring severe outcome \cite{kuccukyavuz2022chance}. Furthermore, we study the case where the benchmark of the fractional constraint is random, for example, the expected threshold for the rate of return is random depending on the weather condition for an express company. Gupta \cite{gupta2009chance} studied the chance constrained fractional programming where the benchmark is a decision variable to be maximized in the objective function. Gu et al. \cite{gu2020land} applied chance constrained fractional programming in the area of land use. Belay et al. \cite{belay2022application} study multi-objective chance constrained fractional programming in production planning. Liu et al. \cite{liu2017distributionally} apply the distributionally robust optimization method in fractional programming. Ji et al. \cite{ji2021data} study the data-driven distributionally robust fractional programming based on the Wasserstein ambiguity set. However as far as we know, the chance constrained nonlinear fractional programming with random benchmark, especially its reformulation and solution method, has not been well studied in literature.

In this paper, we consider a type of chance constrained fractional programming with random benchmark. We assume that the random variables on the denominator and benchmark follow a joint discrete distribution, and the random variables on the numerator follows a Gaussian distribution. We derive a convex reformulation of this type of chance constrained fractional programming. Furthermore, we apply piecewise linear and tangent approximations to the quantile function and derive both upper and lower bounds for practical use.

This paper is organized as follows. In Section \ref{CCFPP}, we study the chance constrained fractional programming and derive its convex reformulation under some mild conditions. In Section \ref{ACCFP}, we apply piecewise linear approximations to the reformulated problem. In Section \ref{ne}, we conduct numerical experiments based on a main economic application problem. The last section concludes this paper.

% (a reference website about fractional programming: \href{https://link.springer.com/referenceworkentry/10.1007/978-0-387-74759-0_189#Abs1_189}{https://link.springer.com/referenceworkentry/10.1007/978-0-387-74759-} )

% LiuJia remark on 2022-12-28: i believe that the neural network approach can be applied to bi-convex problems. This would highlight our new contribution.

% I would propose that we continue the joint chance constrained MDP problem, which finally meet a bi-convex problem, one group of variables is x (large dimensional), another group is y (slack variable, small dimensional, dependent on how many constraint/rows we have in the chance constraint).

% We fix y and solve x which is a convex programming problem, we can use abdel and wu dawen's approach to train a nerual network to get a high efficient solution approach. Then we use global algorithm for y embadded with the trained network to solve y. We may consider heristic algorithm such as eloluvation algorithm (gene or something like this), or we can use branch and bound. 

% This paper can be target on JMLR. 

% Liujia remark 2022-12-28:
% we can consider a series of chance constrained fractional optimization, including linear case, geometric case, convex-concave case.

% Key point: find some real application examples.

% Global optimization of signomial geometric programming using linear relaxation

% target on optimization letters: it publish papers with lenghs between 10-20, maximal to 30

% \subsection{literature review}

% need a more compherensive survey.

% Xiatian plz find some papers related to this topic.

% Major::: stochastic fractional optimization:  Liu yongchao, huifu,  Ji ran......

\section{Chance constrained fractional programming problem}\label{CCFPP}

\subsection{Chance constrained fractional programming model}
In this paper, we study the following chance constrained fractional programming (CCFP) problem:
\begin{subequations}\label{1}
\begin{eqnarray}
& \min\limits_{x} & \mathbb{E} \left[a_{0}^{\top}c_{0}(x)\right]\label{1a}\\
& {\rm{s.t.}} & \mathbb{P}\left(\frac{a_1^{\top}c(x)+b_1}{a_2^{\top}c(x)+b_2}\le\gamma\right)\ge1-\epsilon,\label{1b}\\
&& x\in\mathcal{X},\label{1c}
\end{eqnarray}
\end{subequations} where $x\in \mathbb{R}^m$ is the decision variable,
$$c_{0}(x)=[c_{0,1}(x),c_{0,2}(x),...,c_{0,n}(x)]^{\top}\in\mathbb{R}^{n_0},\ c(x)=[c_{1}(x), c_{2}(x),...,c_{n}(x)]^{\top}\in \mathbb{R}^{n}, $$
$\mathcal{X}$ is a closed convex set and $a_{0}\in\mathbb{R}^{n_0}, a_{1}=[a_{1,1},a_{1,2},...,a_{1,n}]^{\top}, a_{2}=[a_{2,1},a_{2,2},...,a_{2,n}]^{\top}\in \mathbb{R}^n, b_{1}, b_{2}\in\mathbb{R}$,
$\gamma\in \mathbb{R}$ are random variables, $\epsilon\in (0,1)$ is the confidence level of the chance constraint. Note that the denominator $a_2^{\top}c(x)+b_2\neq 0$ always, and we hold the following assumptions in this paper.

\begin{assumption}\label{2}
For the CCFP problem \eqref{1}, we assume that $a_{1}, b_{1}%, c
$ %all 
follow the Gaussian distribution, $a_{2}, b_{2}, \gamma$ follows a discrete distribution with $J$ realizations. $c(x)\ge0$ for any $x\in\mathcal{X}$.
% and 
% \begin{equation}\label{222}
%     \mathbb{P}(a_2^{\top}x+b_2>0)=1.
% \end{equation}
 $\{a_{1}, b_{1}\}$ are independent to $ \{a_{2}, b_2,\gamma\}$.
%In detail, 
The mean vector of $a_{0}^{\top}$ is $\mu_{0}^{\top}=(\mu_{0,1}, \mu_{0,2},..., \mu_{0,n_0})$, the mean vector of $a_{1}^{\top}$ is $\mu_{1}^{\top}=(\mu_{1,1}, \mu_{1,2},..., \mu_{1,n}),$ and the mean value of $b_1$ is $l_1$. 
For $a_{2}, b_{2}, \gamma$, there exist $a_{2}^{j}\ge 0, b_{2}^{j}>0, r_{j}
, j=1,...,J,$ such that $$\mathbb{P}\left(a_{2}=a_{2}^{j},b_{2}=b_{2}^{j}, \gamma=r_{j} \right)=p_{j}$$ and $\sum\limits_{j=1}^{J}p_j=1$.
% For $a_{2}$, there exist in all $U$ vectors $a_{2}^{u}\ge0, u=1,2,...,U,$ such that $\mathbb{P}(a_{2}=a_{2}^{u})=\alpha_{u}>0$ and $\sum\limits_{u=1}^{U}\alpha_u=1$.
% For $b_{2}$, there exist in all $V$ values $b_{2}^{v}>0, v=1,2,...,V,$ such that $\mathbb{P}(b_{2}=b_{2}^{v})=\beta_{v}>0$ and $\sum\limits_{v=1}^{V}\beta_v=1$.
% For $\gamma$, there exist in all $J$ values $r_{j}, j=1,...,J$ such that $\mathbb{P}(\gamma=r_{j})=p_j$ and $\sum\limits_{j=1}^{J}p_j=1$.
The covariance matrix of vector 
$[a_{1,1},a_{1,2},..., a_{1,n}, b_{1}]^{\top}$ is $\Gamma=\{\sigma_{p,q}, p,q=1,...,n+1\}$, and $\sigma_{p,q}\ge 0, p,q=1,...,n+1.$
\end{assumption}

Given Assumption \ref{2}, we could get the following reformulation of \eqref{1}.

\begin{theorem}
Given Assumption \ref{2}, the CCFP problem \eqref{1} has the following reformulation:
\begin{subequations}\label{3}
\begin{eqnarray}
& \max\limits_{x,z} & \mu_{0}^{\top}c_{0}(x)\label{2a}\\
& {\rm{s.t.}} & \Phi^{-1}(z_{j})\sqrt{\sum\limits_{p=1}^{n}\sum\limits_{q=1}^{n}{\sigma}_{p,q}c_{p}(x)c_{q}(x)+\sum\limits_{p=1}^{n}{\sigma}_{p,n+1}c_{p}(x)+\sum\limits_{q=1}^{n}{\sigma}_{n+1,q}c_{q}(x)+\sigma_{n+1,n+1}}\quad\quad \label{2b}\\
&&\quad +(\mu_1-r_{j}a_2^{j})^{\top}c(x)\le r_{j}b_{2}^{j}-l_{1},j=1,...,J,\nonumber\\ 
&& \sum\limits_{j=1}^{J} z_{j}p_{j}\ge 1-\epsilon, \label{2c}\\
&& 0\le z_j \le 1, j=1,...,J,\\
&& x\in\mathcal{X},\label{2f}
\end{eqnarray}
\end{subequations} where $\Phi^{-1}(\cdot)$ is the quantile function of a standard Gaussian distribution $N(0,1)$.
%let $h^{j}=(a_{1,1}-r_{j}a_{2,1},a_{1,2}-r_{j}a_{2,2},..., a_{1,n}-r_{j}a_{2,n}, b_{1}-r_{j}b_{2})^{\top}$ and $ {\Gamma}_{j}=\{ {\sigma}^{j}_{p,q}, p,q=1,...,n+1\}$ is the covariance matrix of $h^{j}$.
\end{theorem}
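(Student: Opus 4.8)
The plan is to convert the probabilistic constraint \eqref{1b} into the deterministic system \eqref{2b}--\eqref{2c} by conditioning on the $J$ discrete scenarios and exploiting Gaussianity of the numerator, while the objective is disposed of in one line: since $c_0(x)$ is deterministic and $\mathbb{E}[a_0]=\mu_0$, linearity of expectation gives $\mathbb{E}[a_0^\top c_0(x)]=\mu_0^\top c_0(x)$, which is the objective \eqref{2a}. The work is therefore concentrated entirely on the chance constraint.

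First I would invoke the independence of $\{a_1,b_1\}$ and $\{a_2,b_2,\gamma\}$ together with the law of total probability. Writing the atoms of the discrete law as $(a_2^j,b_2^j,r_j)$ with masses $p_j$, the constraint splits into
$$\mathbb{P}\!\left(\frac{a_1^\top c(x)+b_1}{a_2^\top c(x)+b_2}\le\gamma\right)=\sum_{j=1}^{J}p_j\,\mathbb{P}\!\left(\frac{a_1^\top c(x)+b_1}{(a_2^j)^\top c(x)+b_2^j}\le r_j\right),$$
so that within each term the denominator is a deterministic quantity. Next I would clear the fraction scenario by scenario, where the sign of the denominator is essential: because $a_2^j\ge0$, $b_2^j>0$ and $c(x)\ge0$ on $\mathcal{X}$, we have $(a_2^j)^\top c(x)+b_2^j>0$, so multiplying through preserves the inequality and the $j$-th event becomes $(a_1-r_ja_2^j)^\top c(x)+b_1\le r_jb_2^j$. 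Its left side is an affine image of the jointly Gaussian vector $[a_1^\top,b_1]^\top$, hence Gaussian, with mean $(\mu_1-r_ja_2^j)^\top c(x)+l_1$ and variance equal to the quadratic form $[c(x)^\top,1]\,\Gamma\,[c(x)^\top,1]^\top$; expanding this form over the blocks of $\Gamma$ reproduces exactly the expression under the square root in \eqref{2b}, and I would remark that it does not depend on $j$ because only the deterministic shift $r_ja_2^j$ varies across scenarios. Standardising then gives $\mathbb{P}(\cdot)=\Phi\!\big((r_jb_2^j-l_1-(\mu_1-r_ja_2^j)^\top c(x))/\sigma(x)\big)$, with $\sigma(x)$ denoting the square-root term.

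The main obstacle, and the step I would argue most carefully, is the equivalence between the single inequality $\sum_j p_j\Phi(\cdot)\ge1-\epsilon$ and the system produced by introducing the epigraph-type auxiliary variables $z_j$. I would prove both directions. Given a feasible $x$, setting each $z_j$ equal to the corresponding scenario probability $\Phi(\cdot)$ yields $\sum_j p_j z_j\ge1-\epsilon$ and, by monotonicity of $\Phi$, the inverted inequalities \eqref{2b}; conversely, any $(x,z)$ feasible for \eqref{2b} forces $\Phi(\cdot)\ge z_j$ for each $j$, whence $\sum_j p_j\Phi(\cdot)\ge\sum_j p_jz_j\ge1-\epsilon$ by \eqref{2c}. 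The inversion itself rests on strict monotonicity of $\Phi$ and multiplication by $\sigma(x)>0$ to pass from $\Phi(\cdot)\ge z_j$ to $\Phi^{-1}(z_j)\sigma(x)+(\mu_1-r_ja_2^j)^\top c(x)\le r_jb_2^j-l_1$.

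Finally I would dispatch the boundary cases $z_j\in\{0,1\}$ through the conventions $\Phi^{-1}(0)=-\infty$ and $\Phi^{-1}(1)=+\infty$, under which \eqref{2b} stays consistent with $\Phi(\cdot)\ge z_j$ (vacuous when $z_j=0$, degenerate when $z_j=1$). Assembling the transformed objective, the decoupled deterministic constraints, the coupling inequality \eqref{2c}, the box constraints on $z$, and $x\in\mathcal{X}$ then delivers the reformulation \eqref{3}, completing the argument.
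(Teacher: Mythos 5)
Your proposal is correct and follows essentially the same route as the paper: decompose the chance constraint over the $J$ discrete scenarios via independence and the law of total probability, introduce the auxiliary variables $z_j$, clear the (positive) denominator scenario by scenario, and apply the standard quantile reformulation of an individual Gaussian chance constraint. You are in fact somewhat more careful than the paper in spelling out both directions of the equivalence with the $z_j$ system and the boundary cases $z_j\in\{0,1\}$, which the paper simply asserts.
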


\begin{proof}
Given Assumption \ref{2}, it is obvious that $\eqref{1a}\iff\eqref{2a}$. The constraint \eqref{1b} has the following equivalent reformulation.
\begin{equation}\label{pl}
\eqref{1b} \Longleftrightarrow \sum\limits_{j=1}^{J}\mathbb{P}(a_{2}=a_{2}^{j}, b_{2}=b_{2}^{j}, \gamma=r_j)\mathbb{P}\left(\frac{a_1^{\top}c(x)+b_1}{{a_2^{j}}^{\top}c(x)+b_2^{j}}\le r_j\right)\ge1-\epsilon.
\end{equation} It is because the random variables $a_{2}, b_{2}$ and $\gamma$ follow a discrete distribution and $\{a_{1}, b_{1}\}$ are independent to $ \{a_{2}, b_2,\gamma\}$ as stated in Assumption \ref{2}. By introducing the auxiliary variable $z_j,j=1,...,J$, we have
\begin{subnumcases}
    {\eqref{pl}\Longleftrightarrow}
    \mathbb{P}\left(\frac{a_1^{\top}c(x)+b_1}{{a_2^{j}}^{\top}c(x)+b_2^{j}}\le r_j\right)\ge z_j, j=1,...,J, \label{pll}\\
\sum\limits_{j=1}^{J}p_{j}z_{j}\ge 1-\epsilon,\\
0\le z_j \le 1, j=1,...,J.
\end{subnumcases}
% \begin{equation}\label{pll}
% \begin{aligned}
% \eqref{pl}\Longleftrightarrow \left\{\begin{array}{l}
% \mathbb{P}(\frac{a_1^{\top}c(x)+b_1}{{a_2^{j}}^{\top}c(x)+b_2^{j}}\le r_j)\ge z_j, j=1,...,J,\\
% \sum\limits_{j=1}^{J}p_{j}z_{j}\ge 1-\epsilon,\\
% 0\le z_j \le 1, j=1,...,J.
% \end{array}\right.
% \end{aligned}
% \end{equation} 
%Then with $\eqref{222}$, we have
As $a_{2}^{j}\ge 0, c(x)\ge0, b_{2}^{j}> 0$ as defined in Assumption \ref{2}, we have
\begin{equation}\label{plll}
\eqref{pll}\Longleftrightarrow\mathbb{P}\left[(a_1-r_{j}a_2^{j})^{\top}c(x)+b_1-r_{j}b_{2}^{j}\le 0\right]\ge z_j, j=1,...,J.
\end{equation}
% \begin{equation}\label{plll}
% \begin{aligned}
% \eqref{pll}\Longleftrightarrow \left\{\begin{array}{l}
% \mathbb{P}\left[(a_1-r_{j}a_2^{j})^{\top}c(x)+b_1-r_{j}b_{2}^{j}\le 0\right]\ge z_j, j=1,...,J, \\
% \sum\limits_{j=1}^{J} p_{j}z_{j}\ge 1-\epsilon,\\
% 0\le z_j \le 1, j=1,...,J.
% \end{array}\right.
% \end{aligned}
% \end{equation} 

%It is known from the assumption that $\mathbb{P}(a_{2}^{\top}x+b_{2}>0)=1.$

% \begin{equation}\label{pllll}
% \begin{aligned}
% \eqref{plll}\Longleftrightarrow \left\{\begin{array}{l}
% \mathbb{E}(h^{j})^{\top}y+\Phi^{-1}(z_{j})\sqrt{{\rm{Var}}({h^{j}}^{\top}y)}\le 0, j=1,...,J,\\
% \sum\limits_{j=1}^{J} p_{j}z_{j}\ge 1-\epsilon,\\
% 0\le z_j \le 1, j=1,...,J.
% \end{array}\right.
% \end{aligned}
% \end{equation} 

%The explanation of \eqref{pllll} is as follows.
Let $y=(c_{1}(x),c_{2}(x),...c_{n}(x),1),$ $h^{j}=(a_{1,1}-r_{j}a_{2,1}^{j},a_{1,2}-r_{j}a_{2,2}^{j},..., a_{1,n}-r_{j}a_{2,n}^{j}, b_{1}-r_{j}b_{2}^{j})$. 
%and $A_{j}=(\underbrace{r_{j}a_{2}^{j},...,r_{j}a_{2}^{j}}_n, r_{j}b_{2}^{j})$,
Then $(a_1-r_{j}a_2^{j})^{\top}c(x)+b_1-r_{j}b_{2}^{j}={h^{j}}^{\top}y$. By \cite{patel1996handbook}, each element of $h^{j}$ follows the Gaussian distribution. Therefore we know from \cite{prekopa2003probabilistic} that
\begin{equation}\label{fgy}
    \eqref{plll}\iff\mathbb{P}({h^{j}}^{\top}y\le 0)\ge z_{j}\iff \mathbb{E}[h^{j}]^{\top}y+\Phi^{-1}(z_{j})\sqrt{{\rm{Var}}({h^{j}}^{\top}y)}\le 0,, j=1,...,J.
\end{equation}
%which gives the equivalence \eqref{pllll}.
By taking the expressions of $\mathbb{E}[h^{j}]$ and ${\rm{Var}}({h^j}^{\top}y)$ in \eqref{fgy}, we have the reformulation of $(4)$,
\begin{equation}\label{plllll}
\begin{aligned}
(4)\Longleftrightarrow \left\{\begin{array}{l}
\Phi^{-1}(z_{j})\sqrt{\sum\limits_{p=1}^{n}\sum\limits_{q=1}^{n}{\sigma}_{p,q}c_{p}(x)c_{q}(x)+\sum\limits_{p=1}^{n}{\sigma}_{p,n+1}c_{p}(x)+\sum\limits_{q=1}^{n}{\sigma}_{n+1,q}c_{q}(x)+\sigma_{n+1,n+1}}\\
\quad +(\mu_1-r_{j}a_{2}^{j})^{\top}c(x)\le r_{j}b_{2}^{j}-l_{1},\  j=1,...,J,\\ 
\sum\limits_{j=1}^{J} p_{j}z_{j}\ge 1-\epsilon,\\
0\le z_j \le 1, j=1,...,J.
\end{array}\right.
\end{aligned}
\end{equation}
By taking $\eqref{plllll}$ back to \eqref{1b}, we complete the proof.
\end{proof}

% \begin{lemma}
%     Given Assumption \ref{2}, the inner function $\frac{\mu_{1}^{\top}x}{\mu_{2}^{\top}x}$ of \eqref{2a} is convex w.r.t. $x$. 
% \end{lemma}

% \begin{proof}
%     Given Assumption \ref{2}, let $f(x)=\frac{\mu_{1}^{\top}x}{\mu_{2}^{\top}x}$. Then for any $x\in \mathbb{R}^{n}$ and $\theta\in[0,1]$, we have
%     \begin{equation}
%     \begin{aligned}
%     f(\theta x+(1-\theta)y) & =\frac{\sum\limits_{i=1}^{n}\mu_{1,i}(\theta x_i+(1-\theta)y_i)}{\sum\limits_{i=1}^{n}\mu_{2,i}(\theta x_i+(1-\theta)y_i)}\\
%     & \le 1
%     \end{aligned}
%     \end{equation}
% \end{proof}

%\subsection{the chance constrained fractional optimization model}

\subsection{Convex reformulation of chance constrained fractional programming}
%\subsection{reformulation}
%\subsection{property: convexity}
In this section, we propose a condition under which \eqref{3} can be transformed into a convex programming problem. Firstly, we introduce the following definition and lemmas.

\begin{definition}
    We say a positive-valued function $f(x)$ is logarithmic convex (log-convex) w.r.t. $x$ if $\log{(f(x))}$ is convex w.r.t. $x$.
\end{definition}

\begin{lemma}[\cite{liu2022chance}, Lemma 2.1]\label{i}
    $\log{(\Phi^{-1}(x))}$ is monotonically on $(0.5,1)$, and convex on $[\Phi(1),1)$.
\end{lemma}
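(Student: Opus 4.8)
The plan is to treat the two assertions separately: handle the monotonicity claim by an elementary composition argument, and establish convexity by an explicit computation of the second derivative through the inverse-function rule, reading the conclusion off its sign.

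For the monotonicity statement on $(0.5,1)$, I would first recall that $\Phi$ is a strictly increasing bijection from $\mathbb{R}$ onto $(0,1)$, so its inverse $\Phi^{-1}$ is strictly increasing; moreover $\Phi^{-1}(0.5)=0$, whence $\Phi^{-1}(x)>0$ for every $x\in(0.5,1)$. Since $\log$ is strictly increasing on $(0,\infty)$, the composition $x\mapsto\log(\Phi^{-1}(x))$ is strictly increasing on $(0.5,1)$, and this requires no calculus at all.

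For the convexity claim, write $L(x)=\log(\Phi^{-1}(x))$ and set $y=\Phi^{-1}(x)$, so that $x=\Phi(y)$ and, letting $\varphi$ denote the standard normal density, $(\Phi^{-1})'(x)=1/\varphi(y)$. Differentiating gives $L'(x)=\frac{1}{y\,\varphi(y)}$. The key step is to differentiate once more, pushing the derivative through the inverse function and using the identity $\varphi'(y)=-y\,\varphi(y)$; after simplification I expect to obtain
\begin{equation*}
L''(x)=\frac{y^{2}-1}{y^{2}\,\varphi(y)^{2}}, \qquad y=\Phi^{-1}(x).
\end{equation*}
Since the denominator is strictly positive on the relevant range, the sign of $L''(x)$ coincides with the sign of $y^{2}-1$, which is nonnegative precisely when $y\ge 1$, i.e. when $\Phi^{-1}(x)\ge 1$, i.e. when $x\ge\Phi(1)$. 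This gives convexity of $L$ on $[\Phi(1),1)$ and completes the argument.

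The main obstacle is bookkeeping rather than conceptual depth: one must carry out the second differentiation through the inverse function and apply $\varphi'=-y\varphi$ without sign errors, since the entire conclusion hinges on the factor $y^{2}-1$ and hence on the threshold $y=1$ being transported faithfully back to the threshold $x=\Phi(1)$ on the original domain.
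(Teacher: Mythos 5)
Your proof is correct. Note that the paper itself gives no argument for this lemma at all---it is imported verbatim as Lemma 2.1 of the cited reference \cite{liu2022chance}---so there is no in-paper proof to compare against; what you have written is a self-contained verification that the paper omits. Both halves check out: the monotonicity claim follows exactly as you say from composing the strictly increasing maps $\Phi^{-1}$ and $\log$, using $\Phi^{-1}(0.5)=0$ to guarantee positivity of the inner function on $(0.5,1)$. For the convexity claim, your second-derivative formula is right: with $y=\Phi^{-1}(x)$ one has $L'(x)=\frac{1}{y\,\varphi(y)}$, and differentiating in $y$ and multiplying by $dy/dx=1/\varphi(y)$ gives
\begin{equation*}
L''(x)=-\frac{\varphi(y)+y\,\varphi'(y)}{y^{2}\varphi(y)^{2}}\cdot\frac{1}{\varphi(y)}\cdot\varphi(y)=\frac{y^{2}-1}{y^{2}\varphi(y)^{2}},
\end{equation*}
using $\varphi'(y)=-y\,\varphi(y)$; on $[\Phi(1),1)$ we have $y\ge 1$, so the numerator is nonnegative and the denominator strictly positive, yielding $L''\ge 0$. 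The only cosmetic gap is that the lemma says ``monotonically'' without specifying the direction; your reading of it as ``monotonically increasing'' is the intended one, and your calculation in fact reproves it, since $L'(x)=1/(y\varphi(y))>0$ whenever $y>0$, i.e.\ $x>0.5$.
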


% \begin{proof}
%     The first order and second order derivatives of $\log(\Phi^{-1}(x))$ are
%     $$\frac{d(\log(\Phi^{-1}(x)))}{dx}=\frac{1}{\varphi\phi(\varphi)}, \ \ \frac{d^{2}(\log(\Phi^{-1}(x)))}{dx^2}=-\frac{1+\varphi\frac{\phi'(\varphi)}{(\phi(\varphi))}}{(\varphi\phi(\varphi))^2}=\frac{\varphi^2-1}{(\varphi\phi(\varphi))^2},$$ respectively, where $\varphi=\Phi^{-1}(x)$, $\varphi(\cdot)$ is the probability density function of the standard Gaussian distribution $N(0,1)$, and $\varphi'(\cdot)$ is the first order derivative of $\phi(\cdot)$. The last equation is due to $\frac{\phi'(\varphi)}{\phi(\varphi)}=\frac{\frac{-\varphi}{\sqrt{2\pi}}e^{-\varphi^{2}/2}}{\frac{1}{\sqrt{2\pi}}e^{-\varphi^{2}/2}}=-\varphi$.

%     For $x>0.5, \varphi>0$ and the first order derivative is greater than $0$. Hence $\log(\Phi^{-1}(x))$ is monotonically increasing on $(0.5, +\infty)$. For $x\ge \Phi(1)$, $\varphi>1$ and the second order derivative is greater than or equal to $0$. Hence $\log(\Phi^{-1}(x))$ is convex on $[\Phi(1), +\infty)$.
% \end{proof}

\begin{lemma}\label{r}
Given $\epsilon\le \min\limits_{1\le j\le J}p_{j}(1-\Phi(1))$, any feasible solution of \eqref{3} satisfies that $z_{j}\ge \Phi(1)$. 
%Given $\epsilon\le \frac{1}{2M}$, any feasible solution of \eqref{3} satisfies that $z_{j}\ge 0.5.$
\end{lemma}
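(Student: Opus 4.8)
The plan is to argue by contradiction, using only the linear constraint \eqref{2c} and the box constraints $0\le z_j\le 1$ on the auxiliary variables; the nonlinear constraint \eqref{2b} plays no role here. The intuition is that if one coordinate $z_k$ were too small, then the probability-weighted average $\sum_{j}p_j z_j$ would necessarily drop below the required level $1-\epsilon$, provided $\epsilon$ is small enough — and the threshold on $\epsilon$ that makes this work is exactly the hypothesis $\epsilon\le\min\limits_{1\le j\le J}p_j(1-\Phi(1))$.

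First I would fix an arbitrary feasible point $(x,z)$ of \eqref{3} and suppose, for contradiction, that the claim fails, i.e., there exists an index $k\in\{1,\dots,J\}$ with $z_k<\Phi(1)$. Next I would bound the weighted sum from above by isolating the $k$-th term, using $z_j\le 1$ for every $j\neq k$ and $z_k<\Phi(1)$, together with $\sum_{j=1}^J p_j=1$:
$$\sum_{j=1}^J p_j z_j = p_k z_k + \sum_{j\neq k} p_j z_j < p_k\Phi(1) + \sum_{j\neq k}p_j = p_k\Phi(1) + (1-p_k) = 1 - p_k\bigl(1-\Phi(1)\bigr).$$
Since feasibility requires $\sum_{j=1}^J p_j z_j\ge 1-\epsilon$, combining the two inequalities gives $1-\epsilon < 1 - p_k(1-\Phi(1))$, that is, $\epsilon > p_k(1-\Phi(1))$.

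Finally I would invoke the hypothesis $\epsilon\le\min\limits_{1\le j\le J}p_j(1-\Phi(1))\le p_k(1-\Phi(1))$, which directly contradicts $\epsilon > p_k(1-\Phi(1))$. Hence no index $k$ with $z_k<\Phi(1)$ can exist, so every feasible solution satisfies $z_j\ge\Phi(1)$ for all $j$. I do not expect any genuine obstacle in this argument; the only step requiring care is the bookkeeping of the probability weights — replacing each off-diagonal $z_j$ by its maximal value $1$ and recognizing that $\sum_{j\neq k}p_j = 1-p_k$, which is precisely what makes the quantity $p_k(1-\Phi(1))$ surface and align with the stated bound on $\epsilon$. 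The strict inequality from $z_k<\Phi(1)$ is what ultimately clashes with the non-strict hypothesis to close the contradiction.
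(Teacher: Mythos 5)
Your proof is correct and is essentially the paper's own argument presented contrapositively: the paper rearranges \eqref{2c} directly to get $z_j\ge \frac{1}{p_j}\bigl(1-\epsilon-\sum_{i\neq j}p_i\bigr)=1-\frac{\epsilon}{p_j}\ge\Phi(1)$, using the same two ingredients you use (bounding the other coordinates by $z_i\le 1$ and the hypothesis $\epsilon\le p_j(1-\Phi(1))$). There is no substantive difference between the two routes.
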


\begin{proof}
Given $\epsilon\le\min\limits_{1\le j\le J}p_{j}(1-\Phi(1))$, by \eqref{2c}, for any $j=1,...,J$, we have
$$z_{j} \ge \frac{1}{p_j}\left(1-\epsilon-\sum\limits_{i=1,..,J,i\neq j}z_{i}p_{i}\right)\ge \frac{1}{p_j}\left(1-\epsilon-\sum\limits_{i=1,..,J,i\neq j}p_i\right)=1-\frac{\epsilon}{p_j}\ge \Phi(1).$$
%Given $\epsilon\le \frac{1}{2M}$, the sufficient condition for $z_{j}\ge 0.5$ can be obtained similarly. 
\end{proof}

\begin{lemma}[\cite{liu2022chance}, Lemma 2.3]\label{8}
Given two nonnegative valued, convex and twice differentiable functions $f(x):\mathbb{R}^n\rightarrow \mathbb{R}_{+}$, $g(y):\mathbb{R}^n\rightarrow \mathbb{R}_{++}$. $f(x)g(y)$ is convex if and only if 
$$f(x)\nabla^{2}g(y)-\frac{1}{g(y)}\nabla^{2}f(x)^{-1}\nabla f(x)\nabla g(y)^{\top}\nabla f(x)\nabla g(y)^{\top}\succeq_{s}0,$$
where $A\succeq_{s}0$ means that $A$ is a positive semidefinite matrix.
\end{lemma}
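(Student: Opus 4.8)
The plan is to exploit the fact that $f$ and $g$ act on disjoint blocks of variables, so that $f(x)g(y)$ is a twice-differentiable function on $\mathbb{R}^{2n}$ whose convexity is equivalent to positive semidefiniteness of its full Hessian on the whole domain. First I would compute that Hessian. Writing $h(x,y)=f(x)g(y)$, the second partials separate cleanly: $\partial^2 h/\partial x_i\partial x_j=g(y)\,\partial^2 f/\partial x_i\partial x_j$, $\partial^2 h/\partial y_k\partial y_l=f(x)\,\partial^2 g/\partial y_k\partial y_l$, and $\partial^2 h/\partial x_i\partial y_k=(\partial f/\partial x_i)(\partial g/\partial y_k)$. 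This yields the block form
$$
\nabla^2 h=\begin{pmatrix} g(y)\nabla^2 f(x) & \nabla f(x)\nabla g(y)^{\top}\\ \nabla g(y)\nabla f(x)^{\top} & f(x)\nabla^2 g(y)\end{pmatrix},
$$
so the problem reduces to deciding when this $2n\times 2n$ matrix is positive semidefinite.

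Next I would invoke the Schur complement characterization. Since $g(y)>0$ and $f$ is convex, the top-left block $g(y)\nabla^2 f(x)$ is positive semidefinite; assuming it is invertible (which is exactly what makes $\nabla^2 f(x)^{-1}$ meaningful in the statement), the Schur complement lemma gives $\nabla^2 h\succeq 0$ if and only if the Schur complement of that block is positive semidefinite. I would then compute this Schur complement directly,
$$
f(x)\nabla^2 g(y)-\nabla g(y)\nabla f(x)^{\top}\bigl(g(y)\nabla^2 f(x)\bigr)^{-1}\nabla f(x)\nabla g(y)^{\top}\succeq 0,
$$
and pull the scalar $1/g(y)$ out front to land on the condition in the statement. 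Observing that $\nabla f(x)^{\top}\nabla^2 f(x)^{-1}\nabla f(x)$ is a nonnegative scalar also makes transparent that the correction is a rank-one negative-semidefinite perturbation of $f(x)\nabla^2 g(y)$.

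The only genuinely delicate point is the invertibility of $\nabla^2 f(x)$. The hypotheses only supply convexity, so $\nabla^2 f(x)$ may be singular, in which case the inverse must be read as a pseudoinverse and one must append the consistency condition that the off-diagonal block $\nabla f(x)\nabla g(y)^{\top}$ lie in the range of $g(y)\nabla^2 f(x)$ before the generalized Schur complement test applies. I would dispose of this either by taking $\nabla^2 f(x)\succ 0$ as a standing regularity assumption—the setting in which the displayed formula is literally valid—or by a standard limiting argument, perturbing $\nabla^2 f(x)$ to $\nabla^2 f(x)+\delta I$, applying the positive-definite version of the Schur complement lemma, and passing to the limit $\delta\downarrow 0$. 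Everything else is a direct block-matrix computation together with the standard Schur complement result, so I expect this degeneracy bookkeeping to be the main, and essentially the only, obstacle.
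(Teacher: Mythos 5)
The paper does not actually prove this lemma: it is imported verbatim by citation from \cite{liu2022chance} (Lemma 2.3), so there is no internal argument to compare yours against. Your route --- write $h(x,y)=f(x)g(y)$, observe that the Hessian splits into the blocks $g\,\nabla^2 f$, $f\,\nabla^2 g$ and $\nabla f\,\nabla g^{\top}$, and apply the Schur complement test to the top-left block --- is the standard proof of this statement and is sound, including your handling of the degenerate case (pseudoinverse plus range condition, or regularize $A\mapsto A+\delta I$ and let $\delta\downarrow 0$; note the limit of $B^{\top}(A+\delta I)^{-1}B$ is finite precisely when the range condition holds, which is what makes the ``only if'' direction survive the limit). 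One thing your computation makes visible, and which you should state explicitly rather than silently repair: the displayed condition in the lemma as printed is not the Schur complement. The correct correction term is
\begin{equation*}
\frac{1}{g(y)}\,\nabla g(y)\,\nabla f(x)^{\top}\nabla^{2}f(x)^{-1}\nabla f(x)\,\nabla g(y)^{\top}
=\frac{\nabla f(x)^{\top}\nabla^{2}f(x)^{-1}\nabla f(x)}{g(y)}\,\nabla g(y)\nabla g(y)^{\top},
\end{equation*}
a symmetric positive semidefinite rank-one matrix, whereas the expression $\nabla^{2}f(x)^{-1}\nabla f(x)\nabla g(y)^{\top}\nabla f(x)\nabla g(y)^{\top}$ in the statement equals $(\nabla g^{\top}\nabla f)\,\nabla^{2}f^{-1}\nabla f\,\nabla g^{\top}$, which is in general not symmetric and has the wrong dimensions of meaning (it is not $B^{\top}A^{-1}B$ for the relevant blocks). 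So your proof establishes the intended, correctly transcribed lemma; flagging the typo is part of doing the job right here. Also be explicit that ``convex'' means jointly convex in $(x,y)$, since that is what the block-Hessian criterion tests and what the paper later uses.
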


% \begin{proof}
% This can be obtained directly from the Jordan decomposition of the Hessian of $f(x)g(y)$,
% \begin{equation}
% \nabla^{2}(f(x)g(y))=\left[\begin{array}{ll}
% g(y)\nabla^{2}f(x) & \nabla f(x)\nabla g(y)^{\top} \\
% \nabla f(x)\nabla g(y)^{\top} & f(x)\nabla^{2}g(y)
% \end{array}\right].
% \end{equation}
% \end{proof}

\begin{lemma}[\cite{liu2022chance}, Lemma 2.4]\label{9}
If $f(x)\nabla^{2}f(x)-\nabla f(x)\nabla f(x)^{\top}\succeq_{s}0$ and $g(y)\nabla^{2}g(y)-\nabla g(y)\nabla g(y)^{\top}\succeq_{s}0$, then $f(x)\nabla^{2}g(y)-\frac{1}{g(y)}\nabla^{2}f(x)^{-1}\nabla f(x)\nabla g(y)^{\top}\nabla f(x)\nabla g(y)^{\top}\succeq_{s}0$.
\end{lemma}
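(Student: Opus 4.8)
The plan is to collapse the matrix inequality in the conclusion to a single scalar comparison extracted from the first hypothesis, and then to deduce the result from the second hypothesis by a nonnegative scaling in the Loewner order. Throughout I use, as is implicit in the appearance of $\nabla^{2}f(x)^{-1}$ in Lemma~\ref{8}, that $f,g>0$ and that $\nabla^{2}f(x)$ is positive definite; the rank-one matrices $\nabla f(x)\nabla f(x)^{\top}$ and $\nabla g(y)\nabla g(y)^{\top}$ are automatically positive semidefinite. It is also worth noting at the outset that the two hypotheses are exactly the matrix forms of log-convexity of $f$ and of $g$, since dividing $f(x)\nabla^{2}f(x)-\nabla f(x)\nabla f(x)^{\top}$ by $f(x)^{2}$ gives $\nabla^{2}\log f(x)$; this explains why the statement should be true, but I will argue directly.

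First I would extract a scalar bound from the hypothesis $f(x)\nabla^{2}f(x)-\nabla f(x)\nabla f(x)^{\top}\succeq_{s}0$. Set $s:=\nabla f(x)^{\top}\nabla^{2}f(x)^{-1}\nabla f(x)\ge 0$ and test the matrix inequality against the vector $v:=\nabla^{2}f(x)^{-1}\nabla f(x)$. Using symmetry of $\nabla^{2}f(x)^{-1}$ one gets $v^{\top}\nabla^{2}f(x)\,v=s$ and $\nabla f(x)^{\top}v=s$, so $v^{\top}\big(f(x)\nabla^{2}f(x)-\nabla f(x)\nabla f(x)^{\top}\big)v\ge 0$ collapses to $f(x)s-s^{2}\ge 0$, i.e. $s\big(f(x)-s\big)\ge 0$. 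Since $s\ge 0$, this yields the key estimate $s\le f(x)$.

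Next I would write the target matrix as $M:=f(x)\nabla^{2}g(y)-\tfrac{s}{g(y)}\,\nabla g(y)\nabla g(y)^{\top}$, i.e. the left-hand side of Lemma~\ref{8} with the scalar factor $s=\nabla f(x)^{\top}\nabla^{2}f(x)^{-1}\nabla f(x)$ isolated, so that the only way the data of $f$ enters the subtracted term is through $s$. Because $\nabla g(y)\nabla g(y)^{\top}\succeq_{s}0$ and $0\le s\le f(x)$ with $g(y)>0$, replacing the coefficient $s$ by the larger $f(x)$ only decreases $M$ in the Loewner order, giving
$$M\ \succeq_{s}\ f(x)\nabla^{2}g(y)-\frac{f(x)}{g(y)}\,\nabla g(y)\nabla g(y)^{\top}=\frac{f(x)}{g(y)}\Big(g(y)\nabla^{2}g(y)-\nabla g(y)\nabla g(y)^{\top}\Big).$$
The second hypothesis makes the parenthesized matrix positive semidefinite, and multiplication by the positive scalar $f(x)/g(y)$ preserves this, so $M\succeq_{s}0$ and the proof is complete.

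I expect the only delicate point to be the first step, the passage from the matrix hypothesis to the scalar bound $s\le f(x)$: the choice of the test vector $v=\nabla^{2}f(x)^{-1}\nabla f(x)$ is precisely what forces both quadratic forms to reduce to $s$, and it is here that positive definiteness of $\nabla^{2}f(x)$ is genuinely used (both to define $s$ and to guarantee $s\ge 0$). Once $s\le f(x)$ is in hand, the remaining steps — monotonicity of the Loewner order under a larger nonnegative rank-one perturbation and under scaling by the positive factor $f(x)/g(y)$ — are routine.
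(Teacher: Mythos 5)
Your argument is correct, but there is nothing in the paper to compare it against: Lemma~\ref{9} is stated without proof and attributed to Lemma~2.4 of \cite{liu2022chance}, so your write-up supplies a justification the paper itself omits. The route you take is the natural one. Your reduction of the first hypothesis to the scalar bound $s=\nabla f(x)^{\top}\nabla^{2}f(x)^{-1}\nabla f(x)\le f(x)$ via the test vector $v=\nabla^{2}f(x)^{-1}\nabla f(x)$ is exactly the Schur-complement mechanism that underlies Lemma~\ref{8}, and the remaining Loewner-order manipulations are routine and correctly executed. Two small remarks. First, the conclusion of Lemma~\ref{9} as typeset is dimensionally garbled (the literal product $\nabla^{2}f(x)^{-1}\nabla f(x)\nabla g(y)^{\top}\nabla f(x)\nabla g(y)^{\top}$ is not symmetric and is not the Schur complement); your reading of it as $\frac{s}{g(y)}\nabla g(y)\nabla g(y)^{\top}$ is the intended and correct one, but you should say explicitly that you are repairing the formula rather than proving it as written. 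Second, in the setting of Lemma~\ref{8} the function $f$ is only assumed nonnegative, not strictly positive; your estimate $s\le f(x)$ still goes through there, since at a point with $f(x)=0$ the first hypothesis forces $-\nabla f(x)\nabla f(x)^{\top}\succeq_{s}0$, hence $\nabla f(x)=0$ and $s=0$, and the conclusion degenerates to $0\succeq_{s}0$. With those caveats noted, the proof is complete and would serve as a self-contained replacement for the external citation.
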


\begin{lemma}[\cite{boyd2004convex}, Section 3.5.2]\label{10}
For positive valued and twice differentiable function $f(x)$, $f(x)\nabla^{2}f(x)-\nabla f(x)\nabla f(x)^{\top}\succeq_{s}0$ if and only if $f(x)$ is log-convex.
\end{lemma}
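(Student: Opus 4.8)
The plan is to reduce this matrix condition to the second-order test for convexity applied to the function $h(x)=\log(f(x))$, which is well-defined and twice differentiable wherever $f(x)>0$. First I would compute the Hessian of $h$ by the chain and quotient rules. Since $\partial_i h = \partial_i f/f$, differentiating once more yields
\[
\nabla^{2}h(x)=\frac{1}{f(x)}\nabla^{2}f(x)-\frac{1}{f(x)^{2}}\nabla f(x)\nabla f(x)^{\top}=\frac{1}{f(x)^{2}}\Big[f(x)\nabla^{2}f(x)-\nabla f(x)\nabla f(x)^{\top}\Big].
\]
This identity is the crux of the argument: it exhibits the matrix $f(x)\nabla^{2}f(x)-\nabla f(x)\nabla f(x)^{\top}$ as a positive scalar multiple of the Hessian of $\log f$.

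Second, because $f(x)>0$ forces $f(x)^{2}>0$, scaling a symmetric matrix by the positive number $1/f(x)^{2}$ preserves positive semidefiniteness in both directions. Hence, at every point, $\nabla^{2}h(x)\succeq_{s}0$ if and only if $f(x)\nabla^{2}f(x)-\nabla f(x)\nabla f(x)^{\top}\succeq_{s}0$.

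Finally, I would invoke the standard second-order characterization of convexity: a twice differentiable function on a convex domain is convex precisely when its Hessian is positive semidefinite throughout that domain. Applying this to $h=\log f$ gives that $f$ is log-convex $\iff$ $\nabla^{2}(\log f)\succeq_{s}0$ everywhere $\iff$ $f\nabla^{2}f-\nabla f\nabla f^{\top}\succeq_{s}0$ everywhere, which is exactly the claim. I do not expect a serious obstacle here: the entire content lies in the Hessian computation, and the only point demanding care is tracking the quotient rule so that the scalar factor $1/f^{2}$ is extracted correctly and the positivity of $f$ is used to transfer the definiteness in both directions.
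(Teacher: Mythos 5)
Your argument is correct and is precisely the standard one: the paper offers no proof of its own (it cites Boyd and Vandenberghe, Section 3.5.2), and the cited source establishes the equivalence by exactly your computation, namely $\nabla^{2}(\log f)=f^{-2}\bigl[f\nabla^{2}f-\nabla f\nabla f^{\top}\bigr]$ together with the second-order convexity criterion and the positivity of $f^{2}$. No gaps; nothing further is needed.
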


\begin{assumption}\label{4}
For the optimization problem \eqref{3}, we assume that $\mu_{1,i}-r_{j}a_{2,i}^{j}\ge 0$, $i=1,...,n$, $j=1,2,...,J$, $\epsilon\le \min\limits_{1\le j\le J}p_{j}(1-\Phi(1))$, $c_{0}(x)$ is convex, and $c_{i}(x)$ is non-negative valued and log-convex w.r.t. $x$ for $i=1,2,...,n.$
\end{assumption}

With the lemmas and assumption above, we have the following theorem which transforms \eqref{3} into a convex reformulation.

\begin{theorem}\label{con}
Given Assumptions \ref{2},\ref{4}, the CCFP problem \eqref{3} is a convex optimization problem and has the following reformulation:
\begin{subequations}\label{CCF}
\begin{eqnarray}
& \max\limits_{x,z,Y} & \mu_{0}^{\top}c_{0}(x)\\
& {\rm{s.t.}} & (\mu_1-r_{j}a_2^{j})^{\top}c(x)+\Vert Y^{j}\Vert_{F}\le r_{j}b_{2}^{j}-l_{1}, j=1,...,J,\label{6b}\\
&& \sqrt{ {\sigma}_{p,n+1}}\exp{\left\{\frac{1}{2}\log{c_p(x)}+\log{\Phi^{-1}(z_j)}\right\}}\le Y_{p,n+1}^{j}, p=1,...,n, j=1,...,J,\label{6c}\\
&& \sqrt{ {\sigma}_{n+1,q}}\exp{\left\{\frac{1}{2}\log{c_q(x)}+\log{\Phi^{-1}(z_j)}\right\}}\le Y_{n+1,q}^{j}, q=1,...,n, j=1,...,J,\label{6d}\\
&& \sqrt{ {\sigma}_{n+1,n+1}}\exp{\left\{\log{\Phi^{-1}(z_j)}\right\}}\le Y_{n+1,n+1}^{j}, j=1,...,J,\\
&& \sqrt{ {\sigma}_{p,q}}\exp{\left\{\frac{1}{2}\log{c_p(x)}+\frac{1}{2}\log{c_q(x)}+\log{\Phi^{-1}(z_j)}\right\}}\le Y_{p,q}^{j}, p,q=1,...,n, j=1,...,J,\quad \quad \label{10f}\\
&& \sum\limits_{j=1}^{J} z_{j}p_{j}\ge 1-\epsilon, \label{10g}\\
&& 0\le z_j \le 1, j=1,...,J,\\
&& x\in\mathcal{X}\label{10i}.
\end{eqnarray}
\end{subequations}
\end{theorem}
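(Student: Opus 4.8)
The plan is to prove Theorem \ref{con} in two stages: first that \eqref{CCF} is an exact reformulation of \eqref{3}, obtained by splitting the radicand in \eqref{2b} into its individual monomials and encoding the square root as the Frobenius norm of an auxiliary matrix $Y^j$; and second that, under Assumptions \ref{2} and \ref{4}, every constraint of \eqref{CCF} describes a convex set. The objective is a fixed linear functional of $c_0(x)$, whose convexity is granted by Assumption \ref{4}, so the substance of the theorem is the convexity of the feasible region, which I verify constraint by constraint.

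For the equivalence, I first note that the quantity under the square root in \eqref{2b} is independent of $j$: once the realization is fixed, $a_2^j, b_2^j, r_j$ are deterministic, so it equals the variance $V(x):=y^\top \Gamma y$ of $a_1^\top c(x)+b_1$ with $y=(c_1(x),\dots,c_n(x),1)^\top$. I would then set each entry of $Y^j$ at the smallest value allowed by \eqref{6c}--\eqref{10f}. Using $\exp\{\frac12\log c_p(x)+\frac12\log c_q(x)+\log\Phi^{-1}(z_j)\}=\Phi^{-1}(z_j)\sqrt{c_p(x)c_q(x)}$, together with the analogous identities for the border and corner entries (where $c_{n+1}\equiv 1$), this yields $(Y^j_{p,q})^2=\Phi^{-1}(z_j)^2\sigma_{p,q}c_p(x)c_q(x)$ and so on. Summing over all $(n+1)^2$ index pairs reconstructs exactly $\Phi^{-1}(z_j)^2 V(x)$, whence $\|Y^j\|_F=\Phi^{-1}(z_j)\sqrt{V(x)}$ at the tight setting; the sign is correct because $z_j\ge\Phi(1)$ forces $\Phi^{-1}(z_j)\ge 1>0$. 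Since \eqref{6b} drives $\|Y^j\|_F$ downward, these bounds are active at optimality, so \eqref{6b}--\eqref{10f} jointly reproduce \eqref{2b}, while \eqref{10g}--\eqref{10i} coincide with the remaining constraints of \eqref{3}.

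For convexity I would proceed constraint by constraint. The aggregation constraint \eqref{10g}, the box constraints on $z$, and $x\in\mathcal{X}$ are affine or convex by hypothesis. In \eqref{6b} the term $(\mu_1-r_j a_2^j)^\top c(x)=\sum_i(\mu_{1,i}-r_j a_{2,i}^j)c_i(x)$ is a nonnegative combination (Assumption \ref{4}) of the functions $c_i(x)$, each convex because a positive log-convex function is convex, and $\|Y^j\|_F$ is convex in $Y^j$, so \eqref{6b} is convex. For \eqref{6c}--\eqref{10f}, each left-hand side is $\sqrt{\sigma_{p,q}}\exp(\cdot)$ with $\sqrt{\sigma_{p,q}}$ real by the nonnegativity of the entries of $\Gamma$ in Assumption \ref{2}, and with an exponent that is an affine combination of $\log c_p(x)$, $\log c_q(x)$ and $\log\Phi^{-1}(z_j)$. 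Every $\log c_i(x)$ is convex because $c_i$ is log-convex (Assumption \ref{4}), and $\log\Phi^{-1}(z_j)$ is convex over the feasible range: Lemma \ref{r}, which uses $\epsilon\le\min_j p_j(1-\Phi(1))$, forces $z_j\ge\Phi(1)$, and Lemma \ref{i} makes $\log\Phi^{-1}$ convex there. Since $\exp$ is convex and nondecreasing and its argument is convex, each left-hand side is convex, and subtracting the linear term $Y^j_{p,q}$ preserves convexity.

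The main obstacle is precisely the joint convexity of the coupled term $\Phi^{-1}(z_j)\sqrt{V(x)}$, which the exponential/Frobenius representation is designed to handle. I would justify it either directly or through the supplied lemmas. Directly: $\sqrt{V(x)}$ is log-convex, since $V(x)$ is a sum of products $\sigma_{p,q}c_p(x)c_q(x)$ of log-convex functions (products and sums of log-convex functions stay log-convex, and the square root halves the convex $\log V$); $\Phi^{-1}(z_j)$ is log-convex on $z_j\ge\Phi(1)$; and a product of log-convex functions in separate variables is log-convex, hence convex. Alternatively, more quantitatively, I would use Lemma \ref{10} to express the log-convexity of $\sqrt{V(x)}$ and of $\Phi^{-1}(z_j)$ as the matrix inequalities $f\nabla^2 f-\nabla f\nabla f^\top\succeq_{s}0$, Lemma \ref{9} to propagate them to the product condition, and Lemma \ref{8} to conclude convexity of the product. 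Either route depends on first securing $z_j\ge\Phi(1)$ through Lemma \ref{r}; without that lower bound $\log\Phi^{-1}$ need not be convex and the whole argument fails. Combining the two stages shows that \eqref{CCF} is a convex problem equivalent to \eqref{3}, as claimed.
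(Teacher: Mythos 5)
Your proof is correct and follows essentially the same route as the paper: Lemma \ref{r} to force $z_j\ge\Phi(1)$, Lemma \ref{i} for the convexity of $\log\Phi^{-1}$ on that range, and the same entrywise Frobenius-norm decomposition of the radicand into the auxiliary matrix $Y^j$. The only real difference is that your primary convexity argument for \eqref{6c}--\eqref{10f} --- $\exp$ of a sum of convex functions is convex --- is more elementary and makes Lemmas \ref{8}--\ref{10} optional for the reformulated problem (the paper invokes them to show $y_p^{1/2}y_q^{1/2}\Phi^{-1}(z_j)$ is convex before composing with the $L_2$-norm), and you spell out the two directions of the equivalence between \eqref{2b} and \eqref{6b}--\eqref{10f} more carefully than the paper, which simply asserts it.
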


\begin{proof}
    By Assumption \ref{4}, $\epsilon\le \min\limits_{1\le j\le J}p_{j}(1-\Phi(1))$, so we have $z_{j}\ge \Phi(1)>0$ by Lemma \ref{r}. By Lemma \ref{i}, we know that $\Phi^{-1}(z_{j})$ is log-convex. As each element of $y=(c_1(x), c_2(x)$,$...$, $c_n(x), 1)$ is log-convex, $\sqrt{y_i}$ is log-convex, then by Lemmas \ref{8}-\ref{10}, we have $y_{p}^{\frac{1}{2}}y_{q}^{\frac{1}{2}}\Phi^{-1}(z_{j})$ is a convex function for any $p,q,j$. Therefore given ${\sigma}_{p,q}\ge 0$,
    $$\Phi^{-1}(z_j)\sqrt{\sum\limits_{p=1}^{n+1}\sum\limits_{q=1}^{n+1}{\sigma}_{p,q}y_{p}y_{q}}=\sqrt{\sum\limits_{p=1}^{n+1}\sum\limits_{q=1}^{n+1}\left(\sqrt{{\sigma}_{p,q}}y_{p}^{\frac{1}{2}}y_{q}^{\frac{1}{2}}\Phi^{-1}(z_{j})\right)^2}$$ is a convex function, as it is a composition of an outer-level $L_2$-norm function and the inner-level convex terms. Then \eqref{2b} is equivalent to
\begin{equation}\label{7}
\left\{\begin{array}{l}
(\mu_1-r_{j}a_2^{j})^{\top}c(x)+\Vert Y^{j}\Vert_{F}\le r_{j}b_{2}^{j}-l_{1}, j=1,...,J,\\
\sqrt{ {\sigma}_{p,q}}\exp{\left\{\frac{1}{2}\log{y_p}+\frac{1}{2}\log{y_q}+\log{\Phi^{-1}(z_j)}\right\}}\le Y_{p,q}^{j}, p,q=1,...,n+1, j=1,...,J.
\end{array}\right. 
\end{equation}
%Note that $y_{n+1}=1$ and $\log{y_{n+1}}=0$, we can further reduce the variable $y$ in \eqref{7}. Finally we get the convex reformulation and finish this proof.
Taking \eqref{7} and the expression of $y$ back to \eqref{3}, we get the reformulation and finish the proof.
\end{proof}

\subsection{the case when c(x) is linear}
 new reformulation

 log-transform from log(x) to t.

\section{Approximation of chance constrained fractional programming}\label{ACCFP}
As $\Phi^{-1}(\cdot)$ is nonelementary, we need to approximate the function $\Phi^{-1}(\cdot)$ to more efficiently solve \eqref{CCF}. In this section, we derive two convex approximations of the CCFP problem \eqref{CCF} using the piecewise linear and tangent approximation technique, respectively.

\subsection{Piecewise linear approximation}\label{331}
We apply the piecewise linear approximation technique \cite{cheng2012second} to $\log\Phi^{-1}(\cdot)$. Without loss of generality, we choose $K+1$ points $\xi_{k}, k=1,...,K+1$ in the interval $[\Phi(1), 1]$ such that $\xi_1<\xi_2<...<\xi_{K+1}$, where $\xi_{1}=\Phi(1)$, $\xi_{K+1}$ is close enough to $1$. We use the piecewise linear function $F(z)=\max\limits_{k=1,...,K}F_{k}(z)$ to approximate $\log\Phi^{-1}(z)$, where $$F_{k}(z):=\log\Phi^{-1}(\xi_k)+\frac{z-\xi_k}{\xi_{k+1}-\xi_{k}}(\log\Phi^{-1}(\xi_{k+1})-\log\Phi^{-1}(\xi_k))=u_{k}z+t_{k}, k=1,...,K,$$ and

% such that 
% \begin{equation}\label{okkk}
%     F_{k}(z)\ge \log\Phi^{-1}(z_j), \ \ \forall z_{j}\in[\Phi(1),1],\  k=1,...,K,\  j=1,...,J,
% \end{equation}

\begin{equation}\label{kop}
u_{k}=\frac{\log\Phi^{-1}(\xi_{k+1})-\log\Phi^{-1}(\xi_k)}{\xi_{k+1}-\xi_k},\ \ t_{k}=\frac{\xi_{k+1}\log\Phi^{-1}(\xi_k)-\xi_{k}\log\Phi^{-1}(\xi_{k+1})}{\xi_{k+1}-\xi_k}.
\end{equation} Then we replace $\log\Phi^{-1}(\cdot)$ in \eqref{CCF} by $F(\cdot)$, and get the following upper convex approximation of CCFP problem \eqref{CCF}.

\begin{theorem}\label{jkl}
    Using the piecewise linear function $F(\cdot)$, we have the following convex approximation of the CCFP problem \eqref{CCF}:
\begin{subequations}\label{TY}
\begin{eqnarray}
& \max\limits_{x,z,Y} & \mu_{0}^{\top}c_{0}(x)\\
& {\rm{s.t.}} & \sqrt{ {\sigma}_{p,n+1}}\exp{\left\{\frac{1}{2}\log{c_p(x)}+u_{k}z_{j}+t_{k}\right\}}\le Y_{p,n+1}^{j}, p=1,...,n, j=1,...,J, k=1,...,K,\label{okl}\\
&& \sqrt{ {\sigma}_{n+1,q}}\exp{\left\{\frac{1}{2}\log{c_q(x)}+u_{k}z_{j}+t_{k}\right\}}\le Y_{n+1,q}^{j}, q=1,...,n, j=1,...,J, k=1,...,K,\label{11c}\\
&& \sqrt{ {\sigma}_{n+1,n+1}}\exp{\left\{u_{k}z_{j}+t_{k}\right\}}\le Y_{n+1,n+1}^{j}, j=1,...,J, k=1,...,K,\\
&& \sqrt{ {\sigma}_{p,q}}\exp{\left\{\frac{1}{2}\log{c_p(x)}+\frac{1}{2}\log{c_q(x)}+u_{k}z_{j}+t_{k}\right\}}\le Y_{p,q}^{j}, p,q=1,...,n, j=1,...,J,\quad \quad\quad \label{17f}\\
&& \quad \quad k=1,...,K,\nonumber\\
% && \sum\limits_{j=1}^{J} z_{j}p_{j}\ge 1-\epsilon,\label{14g} \\
% && 0\le z_j \le 1, j=1,...,J,\\
% && x\in\mathcal{X}\label{14i}.
&& \eqref{6b}, \eqref{10g}-\eqref{10i}.
\end{eqnarray}
\end{subequations} The optimal value of \eqref{TY} is a upper bound of problem \eqref{CCF}. Moreover, if $\lim\limits_{K\rightarrow+\infty}\max\limits_{k=1,...,K}|\xi_{k+1}-\xi_{k}|\rightarrow 0$, the optimal value of \eqref{TY} converges to that of problem \eqref{CCF}.
\end{theorem}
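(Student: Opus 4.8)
The plan is to show that the piecewise linear function $F=\max_{k}F_{k}$ overestimates $\log\Phi^{-1}$ on $[\Phi(1),1)$, so that \eqref{TY} is a conservative restriction of \eqref{CCF}; the upper bound then follows from a feasible–set inclusion, and the convergence from the uniform vanishing of the overestimation gap. First I would prove the pointwise inequality $F(z)\ge\log\Phi^{-1}(z)$ for all $z\in[\Phi(1),1)$. By Lemma \ref{i}, $\log\Phi^{-1}$ is convex on this interval, and each $F_{k}$ is its chord over $[\xi_{k},\xi_{k+1}]$. Convexity gives $F_{k}(z)\ge\log\Phi^{-1}(z)$ for $z\in[\xi_{k},\xi_{k+1}]$ and $F_{k}(z)\le\log\Phi^{-1}(z)$ outside that subinterval; hence on each $[\xi_{m},\xi_{m+1}]$ the maximum $\max_{k}F_{k}$ is attained by the local chord $F_{m}$, so $F$ coincides with the piecewise linear interpolant of $\log\Phi^{-1}$ and dominates it everywhere. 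Exponentiating, $e^{F(z)}\ge\Phi^{-1}(z)$, i.e. $F$ yields a safe over-estimate of the quantile.

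Next I would translate this into a feasible-set inclusion. The quantile enters \eqref{6c}–\eqref{10f} only through the increasing map $\log\Phi^{-1}(z_{j})\mapsto\exp\{\,\cdot\,+\log\Phi^{-1}(z_{j})\}$ in the lower bounds on the entries $Y^{j}_{p,q}$. Imposing the affine inequalities \eqref{okl}–\eqref{17f} simultaneously over $k$ is exactly the requirement obtained by replacing $\log\Phi^{-1}(z_{j})$ with $F(z_{j})=\max_{k}(u_{k}z_{j}+t_{k})$; since $F(z_{j})\ge\log\Phi^{-1}(z_{j})$, these lower bounds dominate those of \eqref{6c}–\eqref{10f}, while the remaining constraints \eqref{6b} and \eqref{10g}–\eqref{10i} are identical. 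Hence every $(x,z,Y)$ feasible for \eqref{TY} is feasible for \eqref{CCF}, so the feasible set of \eqref{TY} is contained in that of \eqref{CCF}. Because the underlying model minimizes the objective \eqref{1a} over this region, optimizing over the smaller feasible set cannot decrease the optimal value, so the optimal value of \eqref{TY} is no less than that of \eqref{CCF}, which is the asserted upper bound.

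For convergence I would quantify the gap. Lemma \ref{r} confines every feasible $z_{j}$ to $[\Phi(1),1)$, and on the compact set $[\Phi(1),\xi_{K+1}]$ the convex $C^{2}$ function $\log\Phi^{-1}$ has bounded second derivative, so the chord error obeys $0\le F(z)-\log\Phi^{-1}(z)\le\tfrac18\big(\max_{k}|\xi_{k+1}-\xi_{k}|\big)^{2}\sup|(\log\Phi^{-1})''|$, which tends to $0$ with the mesh. Thus $F\to\log\Phi^{-1}$ uniformly and the feasible sets of \eqref{TY} increase monotonically toward that of \eqref{CCF}. To conclude that the optimal values agree in the limit I would take an optimal $(x^{*},z^{*},Y^{*})$ of \eqref{CCF}, scale the binding entries $Y^{*}_{p,q}$ of each block by the factor $e^{\,F(z^{*}_{j})-\log\Phi^{-1}(z^{*}_{j})}\to 1$ to regain \eqref{okl}–\eqref{17f}, and, since this may violate \eqref{6b} by the vanishing amount $(e^{\,O(\mathrm{mesh}^{2})}-1)\Vert Y^{*j}\Vert_{F}$, restore feasibility by a convex combination of $x^{*}$ with a strictly feasible point of \eqref{CCF}, using the convexity of the feasible region from Theorem \ref{con} and driving the mixing weight to zero as the mesh refines; continuity of $\mu_{0}^{\top}c_{0}(x)$ then forces the optimal value of \eqref{TY} down to that of \eqref{CCF}.

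The main obstacle is precisely this feasibility-restoration step: the inclusion is one-sided, so near-optimal points of \eqref{CCF} are generically infeasible for the tighter \eqref{TY}, and recovering feasibility without a fixed objective penalty requires a constraint qualification. I would address it by assuming (or verifying from the closed convex $\mathcal{X}$ together with the log-convexity in Assumption \ref{4}) a Slater point for \eqref{CCF}, and by matching the convex-combination weight to the uniform gap $O(\mathrm{mesh}^{2})$. A secondary technical point is the blow-up of $\log\Phi^{-1}$ as $z\to 1$, handled by keeping $\xi_{K+1}<1$ and letting $\xi_{K+1}\to 1$ slowly enough that the quadratic chord error still vanishes on the entire active range of the $z_{j}$.
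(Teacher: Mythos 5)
Your core argument coincides with the paper's: both use Lemma \ref{r} to confine $z_j$ to $[\Phi(1),1]$, Lemma \ref{i} to get convexity of $\log\Phi^{-1}$ there, the resulting chord inequality $\log\Phi^{-1}(z_j)\le\max_{k}(u_k z_j+t_k)$, and then a feasible-set inclusion between the constraint systems \eqref{okl}--\eqref{17f} and \eqref{6c}--\eqref{10f}. Where you genuinely diverge is at the two places the paper is weakest. First, the bound direction: the paper derives the inclusion and then simply asserts ``upper bound,'' even though \eqref{CCF} and \eqref{TY} are written as maximizations, under which a smaller feasible set can only yield a \emph{lower} optimal value; you resolve this by reading the objective as the original minimization \eqref{1a}, which is the only reading under which the stated theorem is true, so you have patched (rather than reproduced) an inconsistency that the paper leaves unaddressed. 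Second, convergence: the paper's argument is purely qualitative --- tightness of \eqref{dfgg} plus the claim that ``the distance between the sets is small enough'' --- and never explains why convergence of feasible sets forces convergence of optimal values; your argument is quantitative (the $O(\mathrm{mesh}^2)$ chord error, rescaling of the $Y^{j}$ blocks, feasibility restoration by mixing with a Slater point), and it correctly identifies that a constraint qualification is needed for that last step, an assumption the paper never states but implicitly relies on. One caveat applies to both proofs: the chord bound \eqref{dfgg} actually holds only on $[\Phi(1),\xi_{K+1}]$, not on all of $[\Phi(1),1]$, since beyond $\xi_{K+1}$ every chord lies \emph{below} the divergent function $\log\Phi^{-1}$; the paper silently ignores feasible points with $z_j>\xi_{K+1}$, while you at least flag the issue and propose letting $\xi_{K+1}\to 1$ at a controlled rate. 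In short, your proof buys rigor exactly where the paper hand-waves, at the cost of an explicit Slater-type assumption.
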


\begin{proof}
        By Assumption \ref{4} and Lemma \ref{r}, $z_{j}\in[\Phi(1),1]$. Then by Lemma \ref{i}, we have $\log\Phi^{-1}(z_j)$ is convex w.r.t. $z_j$ in $[\Phi(1),1]$. Therefore for any two points $\xi_{k}, \xi_{k+1}, k=1,...,K,$ we have $$\log\Phi^{-1}(z_j)\le u_{k}z_{j}+t_{k},\ \ \forall z_{j}\in[\xi_{k},\xi_{k+1}],$$ where $u_{k}$ and $t_{k}$ are defined in \eqref{kop}. Thus we have,
        \begin{equation}\label{dfgg}
            \log\Phi^{-1}(z_{j})\le\max\limits_{k=1,...,K}u_{k}z_{j}+t_{k},\ \ \forall z_{j}\in[\Phi(1), 1].
        \end{equation}
        And for constraints \eqref{6c} and \eqref{okl}, we have
    \begin{equation}
    \begin{split}
        \left\{x,z: \sqrt{ {\sigma}_{p,n+1}}\exp{\left\{\frac{1}{2}\log{c_p(x)}+u_{k}z_{j}+t_{k}\right\}}\le Y_{p,n+1}^{j}, p=1,...,n, j=1,...,J, k=1,...,K\right\}\\\subseteq\left\{x,z: \sqrt{ {\sigma}_{p,n+1}}\exp{\left\{\frac{1}{2}\log{c_p(x)}+\log{\Phi^{-1}(z_j)}\right\}}\le Y_{p,n+1}^{j}, p=1,...,n, j=1,...,J\right\}.
    \end{split}
    \end{equation} Similar relationship holds for constraints \eqref{6d}-\eqref{10f} and \eqref{11c}-\eqref{17f}, we have that the approximation problem \eqref{TY} provides a upper bound of the problem \eqref{CCF}.

Moreover, 
%the $K$ tangent functions $F_{k}(z_j)=u_{k}z_{j}+t_{k}$ for $z_{j}\in \left\{\xi_{1}, \xi_{2},...,\xi_{K}\right\}$ are chosen differently, Therefore when $K\rightarrow+\infty$, 
when $K\rightarrow+\infty$, the gap between adjacent points, i.e. $\max\limits_{k=1,...,K}|\xi_{k+1}-\xi_{k}|$, trends to zero,
the inequality \eqref{dfgg} becomes tight. Furthermore, by the convexity of $\eqref{6c}-\eqref{10f}$ and $\eqref{okl}-\eqref{17f}$, we know that the distance between the sets constrained by $\eqref{6c}-\eqref{10f}$ and $\eqref{okl}-\eqref{17f}$ is small enough when $K\rightarrow+\infty$. As the objective functions and other constraints in \eqref{CCF} and \eqref{TY} are the same, the optimal value of \eqref{TY} converges to that of problem \eqref{CCF} when $K\rightarrow+\infty$.
\end{proof}

%Ratio....?

%\section{When $\beta$ is a decision variable}

% \section{Two nonlinear cases: }

% \subsection{Convex-concave programming case}

% \subsection{Linear relaxation of SGP case}

% \section{Neural network approach }

% \subsection{KKT}

% \subsection{ODE}

% \subsection{dynamic equation}

% \subsection{convergence: semi-definite}

% \subsection{convergence final result}

% \subsection{discussion on 2023/4/20}

% for instance, the ratio of  profit/revenue, left normal, right integer distribution

% (b) Maximization ofreturn on investment [22). Insome resource allocation problems the ratio profit/capital or profit/revenue is to be maxi-mized, A related objective is return per cost maxi-mization.Resource allocation problems with thisobjective are discussed in more detail by Mjelde in45,46]. In these models the term cost may eitherbe related to actual expenditures or may stand forexample for the amount of pollution. Dependingon the nature of the functions describing return,profit, cost or capital different types of fractionalprograms are encountered. If, for example, theprice per unit depends linearly on the output andcost and capital are affine functions then maximi-zation of the return on investment gives rise to aconcave quadratic fractional program(assuminglinear constraints)[50,51]

% cited from: 
% Fractional programming
% Siegfried SCHAIBLE  and Toshihide IBARAKI 

\subsection{Piecewise tangent approximation}\label{ikm}

 We apply the piecewise tangent approximation technique \cite{liu2016stochastic} to $\log\Phi^{-1}(\cdot)$ to derive a lower bound. We choose $K$ points $\xi_{k}, k=1,...,K$ from the interval $[\Phi(1),1]$. 
 %Without loss of generality, we assume that $\xi_1<\xi_2<...<\xi_{K}$.
 Then we use the piecewise tangent function $F(z)=\max\limits_{k=1,...,K}F_{k}(z)$ to approximate $\log\Phi^{-1}(z)$ in problem \eqref{CCF}, where $$F_{k}(z)=d_{k}z+b_{k},$$ and 
\begin{equation}\label{appr}
d_{k}=\frac{(\Phi^{-1})^{(1)}(\xi_k)}{\Phi^{-1}(\xi_{k})},\ \ \ b_{k}=-d_{k}\xi_{k}+\log\Phi^{-1}(\xi_{k}),\ k=1,...,K,
\end{equation} such that 
\begin{equation}\label{12e}
    F_{k}(z)\le \log\Phi^{-1}(z), \ \ \forall z\in[\Phi(1),1],\  k=1,...,K,\  j=1,...,J.
\end{equation} Here the tangent approximation $F_{k}(z)=d_{k}z+b_{k}$ are chosen from the tangent lines of $\log\Phi^{-1}(z)$ at different points in $[\Phi(1),1]$. 
Similar to Theorem \ref{jkl}, when $\lim\limits_{K\rightarrow+\infty}\max\limits_{k=1,...,K}|\xi_{k+1}-\xi_{k}|\rightarrow 0$, the convex bound \eqref{12e} is asymptotically tight. By replacing all $u_{k}z_{j}+t_{k}$ with $d_{k}z_{j}+b_{k}$ in \eqref{TY},  
we get a lower convex approximation of CCFP problem \eqref{CCF}.

\section{Numerical experiments}\label{ne}
In this section, we carry out numerical experiments on a typical nonlinear fractional programming: the Main Economic Application problem \cite{schaible1983fractional}. We use FMINCON in MATLAB to solve the approximated optimization problems. All experiments are conducted on a PC with AMD Ryzen 7 5800H CPU and 16.0 GB RAM.
%The experimental setup is as follows.

Suppose a certain company manufacture $n=5$ different products. For $i=1,...,n$, let $a_{1,i}$ be the cost for the manufacturing of one unit of product $i$, $a_{2,i}$ be the profit gained by the company from a unit of product $i$, $a_{3,i}$ be the expenditure quota of manufacturing a unit of product $i$, $\alpha, \beta$ be the upper and lower bounds for the expenditure quota such that $\alpha\le\mathbb{E}(a_{3}^{\top}x)\le\beta$, $b_{1}, b_{2}$ be some constant cost and profit of manufacturing, respectively, and $x_{i}$ be the production volume of product $i$. For the operation of a company, it aims at maximizing its expected total profit $\mathbb{E}(a_2^{\top}x)$ and keeping the (total cost)/(total profit) ratio not exceeding a reference ratio 
%i.e. $\frac{a_1^{\top}x+b_1}{a_2^{\top}x+b_2}$,
as much as possible. 
%Therefore, for $x\in\mathbb{R}^{n},$ we assume that $$c_{0}(x)=c(x)=[e^{x_{1}},e^{x_2},...,e^{x_n}].$$ For the randomness of the ratio (total cost)/(total profit), 
We apply a chance constraint to describe the requirement on the ratio. The cost, profit and expenditure quota of manufacturing are dependent of external factors. Here we consider weather as our stochastic influencing factor. In particular, we assume that the weather is classified into sunny days and rainy days. On sunny days, the reference (cost)/(profit) ratio of the company is $r_{1}=0.4$. On rainy days, the reference ratio is $r_{2}=0.6$. We assume that the probability of sunny days is $p_1=0.7$ and the probability of rainy days is $p_2=0.3$. In our experiment, we set the confidence level $\epsilon=0.02<\min\limits_{1\le j\le 2}p_{j}(1-\Phi(1))$, $\mu_{1}=[40, 80,60,70,70]$, $a_{2}^{1}=[55, 100, 80, 95, 90]$, $a_{2}^{2}=[45, 90, 70, 85, 80]$, $b_{2}^{1}=5$, $b_{2}^{2}=3$, $\mu_{3}=[30, 50, 20, 40, 30]$, $l_{1}=2$, $[\alpha, \beta]=[50,100]$, and
$$ {\Gamma}=\begin{pmatrix}
 6 & 7 & 1 & 2 & 2 & 1 \\
 7 & 9 & 2 & 4 & 4 & 2 \\
 1 & 2 & 6 & 1 & 1 & 1 \\
 2 & 4 & 1 & 9 & 3 & 1 \\
 2 & 4 & 1 & 3 & 7 & 1 \\
 1 & 2 & 1 & 1 & 1 & 6 
 \end{pmatrix}.$$ Here $ {\Gamma}$ is symmetric positive definite and all settings are in line with Assumptions \ref{2},\ref{4}.
 %We did experiments when $k=1,2,...,50$, respectively.
 We choose the points $\xi_{k}, k=1,2,...,K$ uniformly in $[\Phi(1),1]$.
 %i.e. $\xi_{k}=\Phi(1)+\frac{1-\Phi(1)}{K}(k-1)$.
 We derive numerical results of two approximation models as shown in Table 1.
 %We compare the performance of our approximation model \eqref{ty} for different bounds of expenditure quota $[\alpha, \beta]$. The results are listed in Table 1.

% \begin{figure}[htbp]%调节图片位置，h：浮动；t：顶部；b:底部；p：当前位置
% 	\centering
% 	\includegraphics[width=0.6\linewidth]{}
% 	\caption{Upper bound of objective function when $k=1,2,...,50.$}
% \end{figure}

 \begin{table}[h!]
 \caption{\normalsize Numerical results of two approximation models.}
\centering
\begin{tabular}{c|c|c|c|c|c}
\hline
$K$ & Lower bound & CPU(s) & Upper bound & CPU(s) & Gap\\ \hline
$3$ & 292.3779 & 9.4752 & 292.5845 & 7.2409 & 0.2066 \\ \hline
$4$ & 292.5457 & 10.3780 & 292.6720 & 9.9378 & 0.1263 \\ \hline
$5$ & 292.5142 & 11.0373 & 292.5924 & 8.4648 & 0.0782 \\ \hline
$6$ & 292.4880 & 11.3402 & 292.5377 & 11.3516 & 0.0497 \\ \hline
\end{tabular}
\end{table}

From Table 1, we see that as the number of segments $K$ increases, the gap between the lower bound and the upper bound decreases, which presents the tendency of convergence and it is consistent with the conclusions in Section \ref{331} and Section \ref{ikm}. Moreover, the CPU time does not increase violently with $K$ increasing, which means that our approximation approaches are tractable and efficient.

\section*{Conclusion}
In this paper, we study the chance constrained nonlinear fractional programming where some random variables follow a discrete distribution and others follow Gaussian distribution. Under some mild assumptions, we derive its deterministic convex reformulation. For efficient computation, we apply the piecewise linear and tangent approximation methods to derive upper and lower approximations. Numerical experiments on the main economic application problem validate the efficiency and tractability of approximation models. Considering the distributionally robust chance constrained fractional programming is a promising topic for further study.

\section*{Acknowledgements}

This research was supported by National Key R\&D Program of China under No. 2022YFA1004000 and National Natural Science Foundation of China under No. 11991023 and 11901449.

%\bibliographystyle{Elsevier}
%\bibliography{Reference}

\end{document}